\newtheorem{theorem}{Theorem} [section]
\newtheorem{corollary}[theorem]{Corollary}
\newtheorem{definition}[theorem]{Definition}
\newtheorem{lemma}[theorem]{Lemma}
\newtheorem{proposition}[theorem]{Proposition}
\newenvironment{proof}[1][Proof]{\noindent\textbf{#1.} }{\ \rule{0.5em}{0.5em}}
\begin{document}

\author{Vadim E. Levit\\Department of Computer Science and Mathematics\\Ariel University Center of Samaria, ISRAEL\\levitv@ariel.ac.il
\and Eugen Mandrescu\\Department of Computer Science\\Holon Institute of Technology, ISRAEL\\eugen\_m@hit.ac.il}
\title{Graph Operations that are Good for Greedoids}
\date{}
\maketitle

\begin{abstract}
$S$ is a \textit{local maximum stable set} of a graph $G$, and we write
$S\in\Psi(G)$, if the set $S$ is a maximum stable set of the subgraph induced
by $S\cup N(S)$, where $N(S)$ is the neighborhood of $S$. In \cite{LevMan2} we
have proved that $\Psi(G)$ is a greedoid for every forest $G$. The cases of
bipartite graphs and triangle-free graphs were analyzed in \cite{LevMan45} and
\cite{LevMan07}, respectively.

In this paper we give necessary and sufficient conditions for $\Psi(G)$ to
form a greedoid, where $G$ is:

\textit{(a)} the disjoint union of a family of graphs;

\textit{(b)} the Zykov sum of a family of graphs;

\textit{(c)} the corona $X\circ\{H_{1},H_{2},...,H_{n}\}$ obtained by joining
each vertex $x$ of a graph $X$ to all the vertices of a graph $H_{x}$.

\textbf{Keywords:} Corona, Zykov sum, greedoid, local maximum stable set

\end{abstract}

\section{Introduction}

Throughout this paper $G=(V,E)$ is a simple (i.e., a finite, undirected,
loopless and without multiple edges) graph with vertex set $V=V(G)$ and edge
set $E=E(G).$ If $X\subset V$, then $G[X]$ is the subgraph of $G$ spanned by
$X$. By $G-W$ we mean the subgraph $G[V-W]$, if $W\subset V(G)$. We also
denote by $G-F$ the partial subgraph of $G$ obtained by deleting the edges of
$F$, for $F\subset E(G)$, and we write shortly $G-e$, whenever $F$ $=\{e\}$.

The \textit{neighborhood} of a vertex $v\in V$ is the set $N_{G}(v)=\{w:w\in
V$ \ \textit{and} $vw\in E\}$. We denote the \textit{neighborhood} of
$A\subset V$ by $N_{G}(A)=\{v\in V-A:N(v)\cap A\neq\varnothing\}$ and its
\textit{closed neighborhood} by $N_{G}[A]=A\cup N(A)$, or shortly, $N(A)$ and
$N[A]$, if no ambiguity.

$K_{n},P_{n},C_{n}$ denote respectively, the complete graph on $n\geq1$
vertices, the chordless path on $n\geq2$ vertices, and the chordless cycle on
$n\geq3$ vertices, respectively.

A \textit{stable} set in $G$ is a set of pairwise non-adjacent vertices. A
stable set of maximum size will be referred to as a \textit{maximum stable
set} of $G$, and the \textit{stability number }of $G$, denoted by $\alpha(G)$,
is the cardinality of a maximum stable set in $G$. In the sequel, by
$\Omega(G)$ we denote the set of all maximum stable sets of the graph $G$.

Any stable set $S$ is maximal (with respect to set inclusion) in $G[N[S]]$,
but is not necessarily, a maximum one. A set $A\subseteq V(G)$ is a
\textit{local maximum stable set} of $G$ if $A$ is a maximum stable set in the
subgraph induced by $N[A]$, i.e., $A\in\Omega(G[N[A]])$, \cite{LevMan2}. Let
$\Psi(G)$ stand for the set of all local maximum stable sets of $G$.

Clearly, every stable set containing only pendant vertices belongs to
$\Psi(G)$. Nevertheless, there exist local maximum stable sets that do not
contain pendant vertices. For instance, $\{e,g\}\in\Psi(G)$, where $G$ is the
graph from Figure \ref{fig101}.\begin{figure}[h]
\setlength{\unitlength}{1.0cm} \begin{picture}(5,1.5)\thicklines
\multiput(5,0)(1,0){5}{\circle*{0.29}}
\multiput(7,1)(1,0){2}{\circle*{0.29}}
\put(5,0){\line(1,0){4}}
\put(7,1){\line(1,0){1}}
\put(7,0){\line(0,1){1}}
\put(8,1){\line(1,-1){1}}
\put(5,0.35){\makebox(0,0){$a$}}
\put(6,0.35){\makebox(0,0){$b$}}
\put(6.75,0.3){\makebox(0,0){$c$}}
\put(8,0.35){\makebox(0,0){$d$}}
\put(6.75,1.2){\makebox(0,0){$g$}}
\put(8.3,1.2){\makebox(0,0){$f$}}
\put(9,0.3){\makebox(0,0){$e$}}
\put(4,0.5){\makebox(0,0){$W$}}
\end{picture}\caption{A graph having {various local maximum stable sets}.}%
\label{fig101}%
\end{figure}
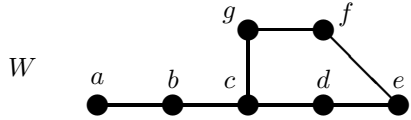

The following theorem concerning maximum stable sets in general graphs, due to
Nemhauser and Trotter Jr. \cite{NemhTro}, shows that some stable sets can be
enlarged to maximum stable sets.

\begin{theorem}
\cite{NemhTro}\label{th1} Every local maximum stable set of a graph is a
subset of a maximum stable set.
\end{theorem}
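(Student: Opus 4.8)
The plan is to start from an \emph{arbitrary} maximum stable set $W\in\Omega(G)$ and surgically modify it into a maximum stable set that actually contains $S$: cut out the part of $W$ lying inside $N[S]$ and paste $S$ in its place. Concretely, I would set
\[
W^{\prime}=S\cup(W-N[S])
\]
and argue that $W^{\prime}$ is a stable set of $G$ of size at least $\left\vert W\right\vert $.

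First I would check stability of $W^{\prime}$. The set $S$ is stable by hypothesis, and $W-N[S]$ is stable as a subset of $W$; moreover no edge joins $S$ to $W-N[S]$, since every neighbour of a vertex of $S$ lies in $N(S)\subseteq N[S]$, whereas $W-N[S]$ is by construction disjoint from $N[S]$. Hence $W^{\prime}$ is stable in $G$.

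Next comes the counting step, which is the only place the local-maximality hypothesis enters. Since $S$ and $W-N[S]$ are disjoint,
\[
\left\vert W^{\prime}\right\vert =\left\vert S\right\vert +\left\vert W-N[S]\right\vert =\left\vert S\right\vert +\left\vert W\right\vert -\left\vert W\cap N[S]\right\vert .
\]
Now $W\cap N[S]$ is a stable set of the induced subgraph $G[N[S]]$, and $S\in\Omega(G[N[S]])$, so $\left\vert W\cap N[S]\right\vert \leq\alpha(G[N[S]])=\left\vert S\right\vert $. Substituting, $\left\vert W^{\prime}\right\vert \geq\left\vert W\right\vert =\alpha(G)$, so $W^{\prime}$ is a maximum stable set of $G$ with $S\subseteq W^{\prime}$, which is exactly the claim.

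I do not expect a real obstacle here: the whole content is the observation that $N[S]$ "shields" $S$ from the rest of the graph, so the exchange $W\mapsto W^{\prime}$ can only increase (or preserve) cardinality. The one subtlety worth flagging in the write-up is that it is essential that $S$ be a \emph{maximum}, not merely a maximal, stable set of $G[N[S]]$, since that is precisely the inequality $\left\vert W\cap N[S]\right\vert \leq\left\vert S\right\vert $ used above; without it the argument collapses.
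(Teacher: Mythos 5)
Your argument is correct and complete: the exchange $W\mapsto W^{\prime}=S\cup(W-N[S])$ preserves stability because $N[S]$ absorbs all neighbours of $S$, and the counting step $\left\vert W\cap N[S]\right\vert\leq\alpha(G[N[S]])=\left\vert S\right\vert$ uses the local-maximality hypothesis in exactly the right place. The paper itself states this theorem only with a citation to Nemhauser and Trotter and gives no proof, and what you have written is precisely the standard argument from the cited literature, so there is nothing to object to.
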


Nemhauser and Trotter Jr. interpret this assertion as a sufficient local
optimality condition for a binary integer programming formulation of the
weighted maximum stable set problem, and use it to prove an outstanding result
claiming that integer parts of solutions of the corresponding linear
programming relaxation maintain the same values in the optimal solutions of
its binary integer programming counterpart. In other words, it means that a
well-known branch-and-bound heuristic for general integer programming problems
turns out to be an exact algorithm solving the weighted maximum stable set problem.

The graph $W$ from Figure \ref{fig101} has the property that every $S\in
\Omega(W)$ contains some local maximum stable set, but these local maximum
stable sets are of different cardinalities: $\{a,d,f\}\in\Omega(W)$ and
$\{a\},\{d,f\}\in\Psi(W)$, while for $\{b,e,g\}\in\Omega(W)$ only
$\{e,g\}\in\Psi(W)$.

However, there exists a graph $G$ satisfying $\Psi(G)=\Omega(G)$, e.g.,
$G=C_{n}$, for $n\geq4$.

A greedoid is a set system generalizing the notion of a matroid.

\begin{definition}
\cite{BjZiegler}, \cite{KorLovSch} A \textit{greedoid} is a pair
$(E,\mathcal{F})$, where $\mathcal{F}\subseteq2^{E}$ is a non-empty set system
satisfying the following conditions:

\setlength {\parindent}{0.0cm}\textit{Accessibility}:

\hspace{1in}for every non-empty $X\in\mathcal{F}$ there is an $x\in X$ such
that $X-\{x\}\in\mathcal{F}$;

\setlength {\parindent}{0.0cm}\textit{Exchange}:

\hspace{1in}for $X,Y\in\mathcal{F},\left\vert X\right\vert =\left\vert
Y\right\vert +1$, there is an $x\in X-Y$ such that $Y\cup\{x\}\in\mathcal{F}$.
\end{definition}

Let us observe that $\{d,g\}\in\Psi(W)$, while $\{d\},\{g\}\notin\Psi(W)$,
where $W$ is the graph depicted in Figure \ref{fig101}. However, it is worth
mentioning that if $\Psi(G)$ is a greedoid and $S\in\Psi(G)$, $\left\vert
S\right\vert =k\geq2$, then according to the accessibility property, one can
build a chain
\[
\{x_{1}\}\subset\{x_{1},x_{2}\}\subset...\subset\{x_{1},...,x_{k-1}%
\}\subset\{x_{1},...,x_{k-1},x_{k}\}=S
\]
such that
\[
\{x_{1},x_{2},...,x_{j}\}\in\Psi(G)\text{, for all }j\in\{1,...,k-1\}.
\]
For example, $\{a\}\subset\{a,b\}\subset S$ is an accessibility chain of the
set $S=\{a,b,c\}\in\Psi(G_{2})$, where $G_{2}$ is presented in Figure
\ref{fig3}.

In \cite{LevMan2} it is proved the following result.

\begin{theorem}
\label{th2} For every tree $T,$ $\Psi(T)$ is a greedoid on its vertex set.
\end{theorem}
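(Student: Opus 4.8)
The plan is to verify the two greedoid axioms for $\Psi(T)$ when $T$ is a tree. The exchange property is essentially a consequence of Theorem \ref{th1}: if $X,Y\in\Psi(T)$ with $|X|=|Y|+1$, then $Y$ extends to a maximum stable set $Y^{\ast}$ of $T$ by Nemhauser and Trotter; choosing an $x\in X-Y$ that (after a little bookkeeping about which vertices of $X$ lie in $N[Y]$) can be adjoined to $Y$ without creating an edge and keeping the enlarged set maximum in its own closed neighborhood. Actually, the cleaner route is to prove accessibility carefully and then use a counting/augmenting argument; so first I would establish a structural lemma about how a local maximum stable set $S$ of a tree interacts with the leaves and with the components of $T-N[S]$.

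First I would set up the key reduction: pick a leaf $v$ of $T$ with neighbor $u$. Given $S\in\Psi(T)$ with $|S|\ge 2$, I want to find $x\in S$ with $S-\{x\}\in\Psi(T)$. The natural candidate analysis splits on whether $S$ contains a leaf of $T$. If $S$ contains a leaf $v$, one hopes $S-\{v\}\in\Psi(T)$, but this can fail because removing $v$ shrinks $N[S]$ and $S-\{v\}$ may no longer be maximum there; so instead I would look at the subtree obtained by deleting $v$ (and possibly $u$) and argue inductively. The main step is therefore an induction on $|V(T)|$: remove a pendant vertex $v$ with support $u$, obtain the forest $T'=T-\{v,u\}$ (or $T-v$), apply the inductive hypothesis to its tree components, and lift an accessibility chain back up to $T$. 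To make this work I need: (i) that $S\cap V(T')$ is a local maximum stable set of $T'$ (checking $N$-relations are preserved), and (ii) that I can reinsert $v$ or $u$ at the right place in the chain so each initial segment stays in $\Psi(T)$.

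For the exchange axiom, with $X,Y\in\Psi(T)$ and $|X|=|Y|+1$, I would argue as follows. Consider $H=T[N[X\cup Y]]$ and compare how $X$ and $Y$ cover this subgraph. Since $X\in\Omega(T[N[X]])$ and $Y\in\Omega(T[N[Y]])$, a parity/augmenting-path argument in the forest — forests being bipartite and having no odd cycles, alternating paths behave well — produces a vertex $x\in X-Y$ such that $Y\cup\{x\}$ is still stable and still maximum in $T[N[Y\cup\{x\}]]$. Concretely I expect to look at the symmetric-difference subgraph $T[X\triangle Y]$, note its components are paths, and use $|X|>|Y|$ to find a component that is a path with both endpoints in $X$; its $X$-endpoint that is "free" relative to $Y$ is the desired $x$. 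The delicate point is confirming the \emph{local maximality} of $Y\cup\{x\}$, i.e. that no larger stable set lives in $N[Y\cup\{x\}]$, which I would get by contradiction against $Y\in\Omega(T[N[Y]])$ together with Theorem \ref{th1} applied inside the relevant subtree.

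The hard part will be the accessibility step — specifically, handling the case where every leaf of $T$ that lies in $S$ has a support vertex whose deletion disconnects $T$ in a way that forces $S$ to "straddle" several components, so that no single removal keeps the set locally maximum. Resolving this requires the structural lemma relating $|S|$, the partition of $S$ induced by the components of $T-N[S]$, and the inductive hypothesis; I would isolate that lemma first and prove it independently, then both axioms follow relatively mechanically. I would also need to be slightly careful that $\Psi(T)$ is nonempty and contains $\varnothing$, which is immediate since $\varnothing\in\Omega(T[\varnothing])$, so the set system is well-defined.
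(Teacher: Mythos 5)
Your text is a plan rather than a proof, and the two places where the real work lies are exactly the places left unresolved. For the exchange axiom, the claim that the components of $T[X\triangle Y]$ are paths is wrong: $X$ and $Y$ are stable sets, not matchings, so $X\triangle Y$ induces a forest whose components may branch arbitrarily (a vertex of $X-Y$ can have many neighbours in $Y-X$ and vice versa); the path/cycle decomposition you are invoking is the one for symmetric differences of matchings and does not transfer. More importantly, even if you produce an $x\in X-Y$ with $Y\cup\{x\}$ stable, the heart of the matter is showing $Y\cup\{x\}\in\Psi(T)$, i.e.\ that $Y\cup\{x\}$ is maximum in $T[N[Y\cup\{x\}]]$. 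Adjoining $x$ enlarges the closed neighbourhood by $N(x)-N[Y]$, so a larger stable set there need not contradict $Y\in\Omega(T[N[Y]])$ at all, and Theorem \ref{th1} is of no help: it passes from local to global maximum stable sets, not the reverse. You acknowledge this is ``the delicate point'' but offer no argument for it.

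The accessibility half has the same character. The induction you sketch needs (i) that $S\cap V(T')\in\Psi(T')$ and, much more seriously, (ii) that a set in $\Psi(T')$ (or an accessibility chain in $T'$) lifts back to $\Psi(T)$ — which is false in general, since closed neighbourhoods in $T$ are strictly larger than in $T'$; this lifting is the actual content of the theorem, and you defer it to a ``structural lemma'' relating $S$ to the leaves and to the components of $T-N[S]$ that is never stated precisely, let alone proved. You yourself flag the straddling case as ``the hard part'' and leave it open. Note that the present paper does not prove Theorem \ref{th2} either; it cites \cite{LevMan2}, where the argument is a genuine induction exploiting pendant vertices together with the matching structure of $T[N[S]]$ — precisely the machinery your sketch gestures at but does not supply. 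As it stands, the proposal cannot be accepted as a proof.
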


The case of bipartite graphs owning a unique cycle, whose family of local
maximum stable sets forms a greedoid is analyzed in \cite{LevMan5} (for an
example, see the graph $G_{1}$ from Figure \ref{fig3}). In general, local
maximum stable sets of bipartite graphs were treated in \cite{LevMan45}, while
for triangle-free graphs we refer the reader to \cite{LevMan07} for details.
Nevertheless, there exist non-bipartite and also non-triangle-free graphs
whose families of local maximum stable sets form greedoids. For instance, the
families $\Psi(G_{2}),\Psi(G_{3}),\Psi(G_{4})$ of the graphs in Figure
\ref{fig3} are greedoids.\begin{figure}[h]
\setlength{\unitlength}{1.0cm} \begin{picture}(5,1.8)\thicklines
\multiput(1,0.5)(1,0){4}{\circle*{0.29}}
\multiput(2,1.5)(1,0){3}{\circle*{0.29}}
\put(1,0.5){\line(1,0){3}}
\put(2,0.5){\line(0,1){1}}
\put(2,1.5){\line(1,0){1}}
\put(3,0.5){\line(1,1){1}}
\put(3,0.5){\line(0,1){1}}
\put(2.5,0){\makebox(0,0){$G_{1}$}}
\multiput(5,0.5)(1,0){3}{\circle*{0.29}}
\multiput(6,1.5)(1,0){2}{\circle*{0.29}}
\put(5,0.5){\line(1,0){2}}
\put(6,0.5){\line(1,1){1}}
\put(6,0.5){\line(0,1){1}}
\put(7,0.5){\line(0,1){1}}
\put(5,0.8){\makebox(0,0){$a$}}
\put(6.3,1.5){\makebox(0,0){$b$}}
\put(7.3,1.5){\makebox(0,0){$c$}}
\put(6,0){\makebox(0,0){$G_{2}$}}
\multiput(8,0.5)(1,0){3}{\circle*{0.29}}
\multiput(9,1.5)(1,0){2}{\circle*{0.29}}
\put(8,0.5){\line(1,0){2}}
\put(9,0.5){\line(0,1){1}}
\put(9,0.5){\line(1,1){1}}
\put(9,1.5){\line(1,0){1}}
\put(10,0.5){\line(0,1){1}}
\put(9,1.5){\line(1,-1){1}}
\put(9,0){\makebox(0,0){$G_{3}$}}
\multiput(11,0.5)(1,0){3}{\circle*{0.29}}
\multiput(11,1.5)(1,0){3}{\circle*{0.29}}
\put(11,0.5){\line(1,0){2}}
\put(11,1.5){\line(1,0){2}}
\put(11,0.5){\line(0,1){1}}
\put(11,0.5){\line(1,1){1}}
\put(11,1.5){\line(1,-1){1}}
\put(12,0.5){\line(0,1){1}}
\put(12,0.5){\line(1,1){1}}
\put(12,1.5){\line(1,-1){1}}
\put(13,0.5){\line(0,1){1}}
\put(12,0){\makebox(0,0){$G_{4}$}}
\end{picture}\caption{Graphs whose family of local maximum stable sets {form
greedoids}.}%
\label{fig3}%
\end{figure}
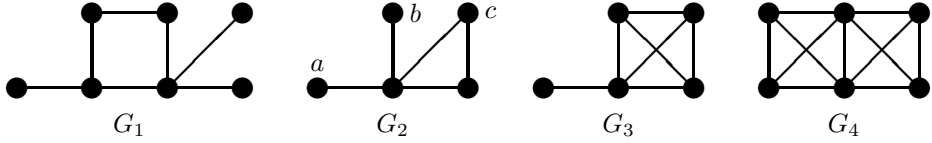

In this note we present "\textit{if and only if}" conditions for $\Psi(G)$ to
be a greedoid, where $G$ is the disjoint union, or the Zykov sum, or the
corona of a family of graphs.

\section{Disjoint union and Zykov sum of graphs}

Let $G$ be the disjoint union of the family of graphs $\{G_{i}:1\leq i\leq
p\},p\geq2$, i.e.,
\begin{align*}
V(G)  &  =V(G_{1})\cup V(G_{2})\cup...\cup V(G_{p})\text{ and}\\
E(G)  &  =E(G_{1})\cup E(G_{2})\cup...\cup E(G_{p}),
\end{align*}
under the assumption that $V(G_{i})\cap V(G_{j})=\varnothing,1\leq i<j\leq p$.
Clearly, $\alpha(G)=\alpha(G_{1})+\alpha(G_{2})+...+\alpha(G_{p})$ and
$S\subseteq V(G)$ is stable if and only if every $S\cap V(G_{i}),1\leq i\leq
p$, is stable. Moreover, one can easily prove the following result.

\begin{proposition}
\label{prop1}If $G$ is the disjoint union of the family of graphs
$\{G_{i}:1\leq i\leq p\},p\geq2$, then:

\emph{(i)} $S\in\Psi(G)$ if and only if $S\cap V(G_{i})\in\Psi(G),1\leq i\leq
p$;

\emph{(ii)} $\Psi(G)$ is a greedoid if and only if every $\Psi(G_{i}),1\leq
i\leq p$, is a greedoid.
\end{proposition}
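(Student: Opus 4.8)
The plan is to reduce the whole statement to the summands, using two elementary facts about a disjoint union $G$: a vertex of $G_{i}$ has all its neighbours in $V(G_{i})$, and the stability number is additive. Throughout write $S_{i}=S\cap V(G_{i})$.

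\emph{Part (i)} (whose right-hand side should of course read $\Psi(G_{i})$). Since $N_{G}(v)=N_{G_{i}}(v)\subseteq V(G_{i})$ for every $v\in V(G_{i})$, we get $N_{G}[S]=N_{G_{1}}[S_{1}]\cup\dots\cup N_{G_{p}}[S_{p}]$ as a disjoint union, so $G[N_{G}[S]]$ is the disjoint union of the graphs $G_{i}[N_{G_{i}}[S_{i}]]$. Now $S$ is stable in $G$ iff every $S_{i}$ is stable in $G_{i}$ (noted in the excerpt), each such $S_{i}$ is then a stable set of $G_{i}[N_{G_{i}}[S_{i}]]$ with $|S_{i}|\le\alpha(G_{i}[N_{G_{i}}[S_{i}]])$, and by additivity of $\alpha$ the equality $|S|=\alpha(G[N_{G}[S]])$ is equivalent to $\sum_{i}|S_{i}|=\sum_{i}\alpha(G_{i}[N_{G_{i}}[S_{i}]])$, hence --- comparing term by term --- to $|S_{i}|=\alpha(G_{i}[N_{G_{i}}[S_{i}]])$ for every $i$. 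Thus $S\in\Psi(G)$ iff $S_{i}\in\Psi(G_{i})$ for all $i$.

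\emph{Part (ii).} First note $\varnothing\in\Psi(H)$ for any graph $H$, so by (i) every union $T_{1}\cup\dots\cup T_{p}$ with $T_{i}\in\Psi(G_{i})$ belongs to $\Psi(G)$; in particular $\varnothing\in\Psi(G)$, and each $\Psi(G_{i})$ sits inside $\Psi(G)$ by padding with empty sets. For the forward implication assume every $\Psi(G_{i})$ is a greedoid. Accessibility: given $\varnothing\neq S\in\Psi(G)$, pick $i$ with $S_{i}\neq\varnothing$, apply accessibility in $\Psi(G_{i})$ to obtain $x\in S_{i}$ with $S_{i}-\{x\}\in\Psi(G_{i})$, and conclude $S-\{x\}\in\Psi(G)$ by (i). Exchange: given $X,Y\in\Psi(G)$ with $|X|=|Y|+1$, additivity yields an index $i_{0}$ with $|X_{i_{0}}|>|Y_{i_{0}}|$; repeatedly applying accessibility of $\Psi(G_{i_{0}})$ trims $X_{i_{0}}$ to a subset $X_{i_{0}}'\subseteq X_{i_{0}}$ lying in $\Psi(G_{i_{0}})$ with $|X_{i_{0}}'|=|Y_{i_{0}}|+1$, and the exchange axiom in $\Psi(G_{i_{0}})$ then supplies $x\in X_{i_{0}}'-Y_{i_{0}}\subseteq X_{i_{0}}-Y_{i_{0}}$ with $Y_{i_{0}}\cup\{x\}\in\Psi(G_{i_{0}})$; as $x\in V(G_{i_{0}})$, we get $x\in X-Y$ and $Y\cup\{x\}\in\Psi(G)$ by (i).

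For the converse assume $\Psi(G)$ is a greedoid and fix $i$. A non-empty $S_{i}\in\Psi(G_{i})$ is a non-empty member of $\Psi(G)$, so accessibility in $\Psi(G)$ gives $x\in S_{i}$ with $S_{i}-\{x\}\in\Psi(G)$, whence $S_{i}-\{x\}\in\Psi(G_{i})$ by (i); and for $X_{i},Y_{i}\in\Psi(G_{i})$ with $|X_{i}|=|Y_{i}|+1$, the exchange axiom in $\Psi(G)$ delivers $x\in X_{i}-Y_{i}$ with $Y_{i}\cup\{x\}\in\Psi(G)$, so $Y_{i}\cup\{x\}\in\Psi(G_{i})$ by (i). Hence $\Psi(G_{i})$ is a greedoid. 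I do not expect a genuine obstacle; the one point calling for care is the exchange step of the forward implication, where the surplus element of $X$ over $Y$ need not lie in a summand whose $X$-part exceeds its $Y$-part by exactly one, so $X_{i_{0}}$ must first be cut down via accessibility to the size the exchange axiom of $\Psi(G_{i_{0}})$ can act on.
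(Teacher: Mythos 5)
Your proof is correct. The paper states Proposition \ref{prop1} without proof (``one can easily prove the following result''), so there is nothing to compare against; your argument is the natural one the authors had in mind: the component-wise decomposition $N_{G}[S]=\bigcup_{i}N_{G_{i}}[S_{i}]$ plus additivity of $\alpha$ gives (i), and (i) reduces both greedoid axioms to the summands. You also correctly flag the typo in (i) (the right-hand side should read $\Psi(G_{i})$), and you handle the only genuinely delicate step --- in the exchange axiom the surplus of $X$ over $Y$ need not be concentrated in a component where the sizes differ by exactly one, so $X_{i_{0}}$ must first be trimmed by repeated accessibility in $\Psi(G_{i_{0}})$ before exchange can be applied; your treatment of this is sound.
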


Recall that the \textit{Zykov sum} of the graphs $G_{i},1\leq i\leq p,p\geq2$,
is the graph $Z=Z[G_{1},...,G_{p}]=G_{1}+G_{2}+...+G_{p}$ having
\begin{align*}
V(Z)  &  =V(G_{1})\cup...\cup V(G_{p}),\\
E(Z)  &  =E(G_{1})\cup...\cup E(G_{p})\cup\{v_{i}v_{j}:v_{i}\in V_{i},v_{j}\in
V_{j},1\leq i<j\leq p\}.
\end{align*}
\begin{figure}[h]
\setlength{\unitlength}{1cm}\begin{picture}(5,2.5)\thicklines
\multiput(3,0)(0,1){2}{\circle*{0.29}}
\multiput(5,0)(0,1){3}{\circle*{0.29}}
\put(3,0){\line(0,1){1}}
\put(3,0){\line(1,0){2}}
\put(3,0){\line(2,1){2}}
\put(3,0){\line(1,1){2}}
\put(3,1){\line(2,-1){2}}
\put(3,1){\line(1,0){2}}
\put(3,1){\line(2,1){2}}
\put(5,0){\line(0,1){2}}
\put(2,1){\makebox(0,0){$G_1$}}
\multiput(8,0)(0,1){3}{\circle*{0.29}}
\multiput(10,0)(0,1){3}{\circle*{0.29}}
\put(8,0){\line(0,1){2}}
\put(8,0){\line(1,0){2}}
\put(8,0){\line(2,1){2}}
\put(8,0){\line(1,1){2}}
\put(8,1){\line(2,-1){2}}
\put(8,1){\line(1,0){2}}
\put(8,1){\line(2,1){2}}
\put(8,2){\line(2,-1){2}}
\put(8,2){\line(1,-1){2}}
\put(8,2){\line(1,0){2}}
\put(10,0){\line(0,1){2}}
\put(7,1){\makebox(0,0){$G_2$}}
\end{picture}\caption{$G_{1}=Z[K_{2},P_{3}]$ and $G_{2}=Z[P_{3},P_{3}]$.}%
\label{fig11}%
\end{figure}
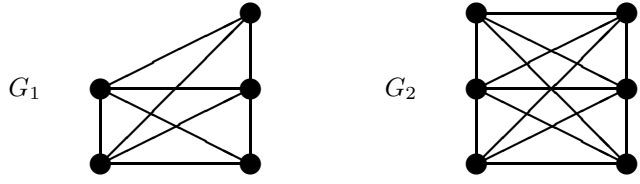Clearly, $\alpha(Z)=\max\{\alpha(G_{i}):1\leq i\leq p\}$. If all
$G_{1},G_{2},...,G_{p},p\geq2$, are complete graphs, then $Z$ is complete, as
well. In this case, we have
\[
\Psi(Z)=\Omega(Z)=\{\{v\}:v\in V(Z)\}
\]
and $\Psi(Z)$ is, evidently, a greedoid.

\begin{lemma}
\label{lem4}If $Z=Z[G_{1},...,G_{p}]$, then
\[
\min\{\left\vert S\right\vert :S\in\Psi(Z)\}\geq\max\nolimits_{2}%
\{\alpha(G_{i}):1\leq i\leq p\},
\]
where $\max_{2}\{\alpha_{i}:1\leq i\leq p\}$ is a second largest number of the sequence.
\end{lemma}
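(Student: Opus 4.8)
The plan is to take an arbitrary $S\in\Psi(Z)$ and show that $S$ must already be large enough to ``see'' a maximum stable set inside at least two of the constituent graphs, which forces $|S|\geq\max_2\{\alpha(G_i)\}$. First I would recall that $S\in\Omega(Z[N_Z[S]])$, and analyze how $S$ distributes among the parts by writing $S_i=S\cap V(G_i)$. Because every pair of vertices from different parts is adjacent in $Z$, a stable set of $Z$ lies entirely inside a single part; so the stable set $S$ is contained in some $V(G_k)$, i.e. $S=S_k$ and $S_i=\varnothing$ for $i\neq k$.

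Next I would examine $N_Z[S]$. Since $S\subseteq V(G_k)$ is non-empty, every vertex of every other part $G_j$ ($j\neq k$) is adjacent to some (in fact every) vertex of $S$, hence $V(G_j)\subseteq N_Z(S)$ for all $j\neq k$, and $N_Z[S]=S\cup N_{G_k}(S)\cup\bigcup_{j\neq k}V(G_j)$. The induced subgraph $Z[N_Z[S]]$ therefore contains, as an induced subgraph on the vertex set $\bigcup_{j\neq k}V(G_j)$, the Zykov sum $Z[G_j:j\neq k]$; a maximum stable set of that piece has size $\max\{\alpha(G_j):j\neq k\}$, and since its vertices lie in $N_Z[S]$ it is a stable set of $Z[N_Z[S]]$. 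By the defining property $S\in\Omega(Z[N_Z[S]])$, we get $|S|\geq\max\{\alpha(G_j):j\neq k\}$. Finally, $\max\{\alpha(G_j):j\neq k\}\geq\max_2\{\alpha(G_i):1\leq i\leq p\}$ — removing one term $\alpha(G_k)$ from the sequence leaves a maximum that is at least the second-largest of the whole sequence — which yields the claimed bound, and since $S\in\Psi(Z)$ was arbitrary the minimum over $\Psi(Z)$ satisfies the same inequality.

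I do not expect a serious obstacle here; the only point requiring a little care is the last comparison of maxima, where one has to check both cases: if $k$ is an index achieving the overall maximum $\alpha$, then $\max\{\alpha(G_j):j\neq k\}$ equals $\max_2$ by definition; if $k$ is not such an index, then $\max\{\alpha(G_j):j\neq k\}$ is the full maximum $\alpha\geq\max_2$. Either way the inequality holds. A secondary detail worth stating explicitly is that $\Psi(Z)\neq\varnothing$ (so the minimum is well-defined), which follows since any singleton $\{v\}$ with $v$ of maximum degree, or more simply any $S\in\Omega(Z)$, lies in $\Psi(Z)$ — but for the purpose of the stated inequality it suffices to argue about an arbitrary member of $\Psi(Z)$.
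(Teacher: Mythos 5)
Your argument is correct and follows essentially the same route as the paper: a stable set of $Z$ must lie inside a single part $V(G_k)$, all other parts are contained in $N_Z[S]$, so $|S|\geq\max\{\alpha(G_i):i\neq k\}\geq\max_2\{\alpha(G_i):1\leq i\leq p\}$. You merely spell out the neighborhood containment and the two-case comparison of maxima, which the paper compresses into the identity $\max_2\{\alpha(G_i)\}=\min_k\max\{\alpha(G_i):i\neq k\}$.
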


\begin{proof}
Notice that if $S\subseteq V(Z)$ is stable, then there is some $i\in
\{1,2,...,p\}$ such that $S\subseteq V(G_{i})$. Hence, if $S\in\Psi(Z)$, then
$S\in\Psi(G_{k})$ for some $k\in\{1,2,...,p\}$, and, in addition,
\[
\left\vert S\right\vert \geq\max\{\alpha(G_{i}):1\leq i\leq p,i\neq k\}.
\]

Since
\[
\max\nolimits_{2}\{\alpha(G_{i}):1\leq i\leq p\}=\min_{1\leq k\leq p}\left(
\max\{\alpha(G_{i}):1\leq i\leq p,i\neq k\}\right)  ,
\]

we get that%
\[
\min\{\left\vert S\right\vert :S\in\Psi(Z)\}\geq\max\nolimits_{2}%
\{\alpha(G_{i}):1\leq i\leq p\},
\]
which completes the proof.
\end{proof}

Let us observe that for the graphs $G_{1}=Z[K_{2},P_{3}]$ and $G_{2}%
=Z[P_{3},P_{3}]$ (depicted in Figure \ref{fig11}), $\Psi(G_{1})$ is a
greedoid, while $\Psi(G_{2})$ is not a greedoid, because $\{v\}\notin
\Psi(G_{2})$, for every $v\in V(G_{2})$.

\begin{proposition}
\label{prop2}Let $Z=Z[G_{1},...,G_{p}]$ be such that $\alpha(Z)>1$. Then
$\Psi(Z)$ is a greedoid if and only if the following assertions are true:

\emph{(i)} all $\Psi(G_{i}),1\leq i\leq p$, are greedoids;

\emph{(ii)} there is a unique $k\in\{1,2,..,p\}$ such that $G_{k}$ is not complete;

\emph{(iii)} $\Psi(Z)=\Psi(G_{k})$.
\end{proposition}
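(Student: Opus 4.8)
The plan is as follows. The ``if'' direction is immediate: granting (i), (ii), (iii), condition (iii) gives $\Psi(Z)=\Psi(G_{k})$, and condition (i) applied to the index $k$ says $\Psi(G_{k})$ is a greedoid; hence $\Psi(Z)$ is a greedoid. All the work is in the ``only if'' direction, and I would begin it by recording a structural description of $\Psi(Z)$. If $S\subseteq V(Z)$ is stable and nonempty, then $S$ lies inside a unique $V(G_{i})$, and $Z[N_{Z}[S]]$ is again a Zykov sum, namely of $G_{i}[N_{G_{i}}[S]]$ together with all the $G_{j}$, $j\neq i$; hence $\alpha(Z[N_{Z}[S]])=\max\{\alpha(G_{i}[N_{G_{i}}[S]]),\ \max_{j\neq i}\alpha(G_{j})\}$. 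Since $S$ sits entirely inside the $G_{i}[N_{G_{i}}[S]]$ part, $|S|\le\alpha(G_{i}[N_{G_{i}}[S]])$, so $S$ is maximum in $Z[N_{Z}[S]]$ exactly when $|S|=\alpha(G_{i}[N_{G_{i}}[S]])$ and $|S|\ge\alpha(G_{j})$ for every $j\neq i$. That is, for nonempty $S\subseteq V(G_{i})$,
\[
S\in\Psi(Z)\iff S\in\Psi(G_{i})\ \text{ and }\ |S|\ge\alpha(G_{j})\ \text{ for all }j\neq i .
\]

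Next, assuming $\Psi(Z)$ is a greedoid, I would prove (ii). Since $\alpha(Z)=\max_{i}\alpha(G_{i})>1$, at least one factor $G_{k}$ is non-complete, i.e.\ $\alpha(G_{k})\ge 2$; I claim such $k$ is unique. Suppose $G_{k}$ and $G_{l}$ with $k\neq l$ are both non-complete; then $\alpha(G_{k}),\alpha(G_{l})\ge 2$, so $\max\nolimits_{2}\{\alpha(G_{i}):1\le i\le p\}\ge 2$. On the other hand, $\Psi(Z)$ contains every maximum stable set of $Z$ (a maximum stable set of $Z$ is clearly maximum in the induced subgraph on its closed neighborhood), and such a set has cardinality $\alpha(Z)\ge 2$; applying the accessibility axiom to it repeatedly produces a singleton belonging to $\Psi(Z)$. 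But by Lemma \ref{lem4} every nonempty member of $\Psi(Z)$ has cardinality at least $\max\nolimits_{2}\{\alpha(G_{i}):1\le i\le p\}\ge 2$, a contradiction. Hence there is exactly one non-complete $G_{k}$, which is (ii).

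Finally, (iii) and (i) follow from (ii) by bookkeeping, with no further use of the greedoid hypothesis. For $j\neq k$ the graph $G_{j}$ is complete, so $\alpha(G_{j})=1$ and $\Psi(G_{j})$ consists of the singletons of $V(G_{j})$ (together with $\varnothing$). Feeding this into the displayed equivalence: a nonempty $S\subseteq V(G_{j})$ with $j\neq k$ would require $|S|\ge\alpha(G_{k})\ge 2$ while $|S|=1$, impossible; whereas for a nonempty $S\subseteq V(G_{k})$ the side conditions $|S|\ge\alpha(G_{j})=1$ ($j\neq k$) hold automatically, so $S\in\Psi(Z)\iff S\in\Psi(G_{k})$. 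Thus $\Psi(Z)=\Psi(G_{k})$, which is (iii); and then (i) holds as well, since $\Psi(G_{k})=\Psi(Z)$ is a greedoid by hypothesis while each $\Psi(G_{j})$ with $j\neq k$ is a greedoid because $G_{j}$ is complete.

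The main obstacle is the uniqueness step in (ii): the key point is that a greedoid containing any set of size $\ge 1$ must, by iterated accessibility, contain a singleton, and this collides with the bound $\max\nolimits_{2}\{\alpha(G_{i}):1\le i\le p\}\ge 2$ given by Lemma \ref{lem4}. The one other spot needing care is the structural description in the first paragraph — precisely, recognizing $Z[N_{Z}[S]]$ as a Zykov sum and using $|S|\le\alpha\big(G_{i}[N_{G_{i}}[S]]\big)$, which together pin down when $S$ is a maximum stable set of $Z[N_{Z}[S]]$.
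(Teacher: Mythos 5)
Your proof is correct and follows essentially the same route as the paper: accessibility yields a singleton in $\Psi(Z)$, Lemma \ref{lem4} then forces all factors but one, say $G_{k}$, to be complete, and the identification $\Psi(Z)=\Psi(G_{k})$ finishes the argument. Your upfront biconditional describing $\Psi(Z)$ for Zykov sums is just a compact repackaging of the paper's double-inclusion step, not a genuinely different approach.
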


\begin{proof}
Taking into account the definition of $Z$, it follows that at least one of the
graphs $G_{i}$ is not complete, because and $\alpha(Z)>1$.

Assume that $\Psi(Z)$ is a greedoid and let $\{a\}\in\Psi(Z)$. Hence we infer
that
\[
\min\{\left\vert S\right\vert :S\in\Psi(Z)\}=1.
\]
Consequently, by Lemma \ref{lem4}, we get $1\geq\max\nolimits_{2}%
\{\alpha(G_{i}):1\leq i\leq p\}$.

Thus all $G_{i},1\leq i\leq p$ but one must be complete graphs. Suppose
$G_{k}$ is the unique non-complete graph. Then $a\in V(G_{k})$ and
$\alpha(Z)=\alpha(G_{k})$.

Clearly, all $\Psi(G_{i}),1\leq i\leq p,i\neq k$, are greedoids. In addition,
$\{v\}\notin\Psi(Z)$, for every $v\in V(Z)-V(G_{k})$, because $V(G_{k}%
)\subseteq N_{Z}(v)$ and $\alpha(G_{k})>1$. It follows that $S\subseteq
V(G_{k})$, for every $S\in\Psi(Z)$. Moreover, one can say that $S\in\Psi
(G_{k})$, i.e., $\Psi(Z)\subseteq\Psi(G_{k})$.

Otherwise, if some $A\in\Psi(Z)$ does not belong to $\Psi(G_{k})$, it follows
that there is a stable set $B$ in $N_{G_{k}}[A]$, larger than $A$. Since $B$
is stable in $Z$, as well, and $B\subseteq N_{G_{k}}[A]\subseteq N_{Z}[A]$, it
implies $A\notin\Psi(Z)$, in contradiction with the choice of $A$. On the
other hand, taking into account that no stable set in $Z$ can meet both
$V(G_{k})$ and $V(Z)-V(G_{k})$, it follows that $\Psi(G_{k})\subseteq\Psi(Z)$
is true, as well. In other words, we infer that $\Psi(Z)=\Psi(G_{k})$, which
ensures that $\Psi(G_{k})$ is a greedoid.

The converse is clear.
\end{proof}

\section{Corona of graphs}

Let $X$ be a graph with $V(X)=\{v_{i}:1\leq i\leq n\}$, and $\{H_{i}:1\leq
i\leq n\}$ be a family of graphs. Joining each $v_{i}\in V(X)$ to all the
vertices of $H_{i}$, we obtain a new graph, which we denote by $G=X\circ
\{H_{1},H_{2},...,H_{n}\}$ (see Figure \ref{fig12} for an example, where
$X=K_{3}+v_{3}v_{4}$). If $H_{1}=H_{2}=...=H_{n}=H$, we write $G=X\circ H$,
and in this case, $G$ is called the \textit{corona} of $X$ and $H$%
.\begin{figure}[h]
\setlength{\unitlength}{1cm}\begin{picture}(5,2.5)\thicklines
\multiput(6,0.5)(1,0){4}{\circle*{0.29}}
\put(6,2.5){\circle*{0.29}}
\multiput(5,1.5)(0,1){2}{\circle*{0.29}}
\multiput(7,1.5)(1,0){4}{\circle*{0.29}}
\multiput(8,2.5)(1,0){2}{\circle*{0.29}}
\multiput(6,0.5)(1,0){3}{\line(1,0){1}}
\multiput(7,0.5)(1,0){2}{\line(0,1){1}}
\multiput(7,0.5)(1,0){2}{\line(1,2){1}}
\multiput(7,1.5)(1,0){2}{\line(1,1){1}}
\multiput(8,0.5)(1,0){2}{\line(1,1){1}}
\put(5,1.5){\line(1,1){1}}
\put(5,1.5){\line(0,1){1}}
\put(5,2.5){\line(1,0){1}}
\put(5,1.5){\line(1,-1){1}}
\put(5,2.5){\line(1,-2){1}}
\put(6,0.5){\line(0,1){2}}
\put(9,1.5){\line(0,1){1}}
\qbezier(6,0.5)(7,-0.7)(8,0.5)
\put(7.8,1.3){\makebox(0,0){$x$}}
\put(6.35,2.5){\makebox(0,0){$y$}}
\put(9,1.15){\makebox(0,0){$z$}}
\put(6.75,1.3){\makebox(0,0){$u$}}
\put(10,1.85){\makebox(0,0){$t$}}
\put(6,0.1){\makebox(0,0){$v_1$}}
\put(7,0.17){\makebox(0,0){$v_2$}}
\put(8.2,0.1){\makebox(0,0){$v_3$}}
\put(9.2,0.1){\makebox(0,0){$v_4$}}
\put(4.5,2){\makebox(0,0){$K_3$}}
\put(7.1,2.1){\makebox(0,0){$K_2$}}
\put(9.3,2){\makebox(0,0){$P_3$}}
\put(10.45,1.5){\makebox(0,0){$K_1$}}
\put(3.3,1.5){\makebox(0,0){$G$}}
\end{picture}\caption{$G=(K_{3}+v_{3}v_{4})\circ\{K_{3},K_{2},P_{3},K_{1}\}$
is a well-covered graph.}%
\label{fig12}%
\end{figure}
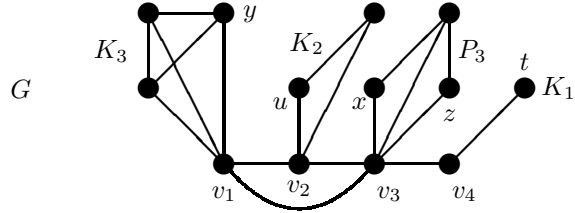

Let us notice that $G=X\circ\{H_{1},H_{2},...,H_{n}\}$ has $\alpha
(G)=\alpha(H_{1})+\alpha(H_{2})+...+\alpha(H_{n})$.

Let us consider the graph $G$ depicted in Figure \ref{fig122}. Notice that:

\begin{itemize}
\item $S_{1}=\{x,z,v_{4}\}\in\Psi(G)$ and also $S_{1}\cap V(H)\in\Psi(H)$, for
every $H\in\{K_{3},K_{2},P_{3},K_{1}\}$;

\item the set $S_{2}=\{y,v_{2}\}$ is stable, but $S_{2}\notin\Psi(G)$, because
$\{y,u,v_{3}\}\subseteq N_{G}[S_{2}]$ and it is stable and larger than $S_{2}$;

\item $\{v_{4}\},\{v_{2},v_{4}\}\in\Psi(K_{3}+v_{3}v_{4})$, but $\{v_{4}%
\},\{v_{2},v_{4}\}\notin\Psi(G)$;

\item $\{y,v_{4}\}\notin\Psi(G)$, since $\{y,t,v_{3}\}\subseteq N_{G}%
[\{y,v_{4}\}]$ and it is stable and larger than $\{y,v_{4}\}$;

\item $\{y\}\in\Psi(K_{3}),\{x,z\}\in\Psi(P_{3})$ and also $\{x,y,z\}\in
\Psi(G)$;

\item the set $S_{3}=\{y,v_{3}\}$ is stable and $S_{2}\cap V(H)\in\Psi(H)$,
for each $H\in\{K_{3},K_{2},P_{3},K_{1}\}$, but $S_{3}\notin\Psi(G)$.
\end{itemize}

\begin{lemma}
\label{lem3}Let $G=X\circ\{H_{1},H_{2},...,H_{n}\}$, where $V(X)=\{v_{i}:1\leq
i\leq n\},n\geq2$. Then the following assertions are true :

\emph{(i)} $\Psi(H_{i})\subseteq\Psi(G),1\leq i\leq n$;

\emph{(ii)} if $v_{i}\in S\in\Psi(G)$, then $H_{i}$ is complete, and $S\cap
V(H_{k})\neq\varnothing$, for each $v_{k}\in N_{X}(v_{i})$;

\emph{(iii)} if $S\in\Psi(G)$, then $S\cap V(H_{i})\in\Psi(H_{i}),1\leq i\leq
n$;

\emph{(iv)} if $S$ is a stable set in $G$ such that: $S\cap V(H_{i})\in
\Psi(H_{i}),1\leq i\leq n$, and for every $v_{i}\in S\cap V(X),H_{i}$ is a
complete graph, while $S\cap V(H_{k})\neq\varnothing$, for all $v_{k}\in
N_{X}(v_{i})$, then $S\in\Psi(G)$.
\end{lemma}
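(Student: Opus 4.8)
The plan is to prove Lemma \ref{lem3} part by part, since the four claims build on one another. For (i), let $S \in \Psi(H_i)$; I need $S \in \Omega(G[N_G[S]])$. The key observation is that $N_G[S]$ inside $G$ differs from $N_{H_i}[S]$ only by possibly adding $v_i$ (this happens exactly when $S \cap V(H_i) = S \neq \varnothing$, so $v_i$ is adjacent to $S$). Adding the single vertex $v_i$, which is adjacent to everything in $V(H_i)$, cannot create a larger stable set: any stable set $T$ in $G[N_G[S]]$ either avoids $v_i$, in which case $T \subseteq N_{H_i}[S]$ and $|T| \le |S|$ by $S \in \Psi(H_i)$, or contains $v_i$, in which case $T \setminus \{v_i\} \subseteq N_{H_i}[S] \setminus V(H_i) = \varnothing$ wait --- more carefully, $T \cap V(H_i) = \varnothing$, but then $T \subseteq \{v_i\}$ has size $1 \le |S|$. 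Either way $|T| \le |S|$, so $S$ is maximum and $S \in \Psi(G)$.

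For (ii), suppose $v_i \in S \in \Psi(G)$. If $H_i$ were not complete, then $H_i$ contains two non-adjacent vertices $a, b$; since $a, b \in N_G(v_i) \subseteq N_G[S]$ and $a, b$ are non-adjacent to $v_i$, the set $(S \setminus \{v_i\}) \cup \{a, b\}$ --- after checking it is stable, which requires that $a,b$ have no other neighbors in $S$; this needs a small argument using the structure of the corona --- would be a stable set in $G[N_G[S]]$ larger than $S$, a contradiction. Actually the cleanest route: $S$ is maximum in $G[N_G[S]]$, and $N_G[S] \supseteq N_G[v_i] \supseteq V(H_i)$, so $S \cap N_G[v_i]$ must be a maximum stable set of $G[N_G[v_i]]$; but $G[N_G[v_i]]$ contains $H_i$ joined to $v_i$, whose stability number is $\max\{1, \alpha(H_i)\} = \alpha(H_i)$ when $\alpha(H_i) \ge 1$, and any maximum stable set containing $v_i$ forces $\alpha(H_i) = 1$, i.e. $H_i$ complete. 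Similarly, for $v_k \in N_X(v_i)$: $v_k \in N_G[S]$ and $v_k$ is adjacent to $v_i \in S$, so $v_k \notin S$; if $S \cap V(H_k) = \varnothing$ then no vertex of $V(H_k) \cup \{v_k\}$ lies in $S$, yet $V(H_k) \cup \{v_k\} \subseteq N_G[S]$ (since $v_k \in N_G(v_i)$ and $V(H_k) \subseteq N_G(v_k)$), so we could add one vertex of $V(H_k)$ to $S$ to get a larger stable set in $G[N_G[S]]$ --- contradiction, after verifying that vertex has no neighbor in $S$ (its neighbors are within $V(H_k) \cup \{v_k\}$, all outside $S$).

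For (iii), let $S \in \Psi(G)$ and fix $i$. I want $S \cap V(H_i) \in \Psi(H_i)$, i.e. $S \cap V(H_i) \in \Omega(H_i[N_{H_i}[S \cap V(H_i)]])$. If $S \cap V(H_i) = \varnothing$ this is trivial only if $\alpha(H_i) = 0$, so one must argue $S \cap V(H_i) \neq \varnothing$ whenever $\alpha(H_i) \ge 1$: indeed $V(H_i) \subseteq N_G[S]$ in most cases --- if $v_i \in S$ then $V(H_i) \subseteq N_G(v_i) \subseteq N_G[S]$; if $v_i \notin S$ but some neighbor of $v_i$ in $X$ lies in $S$, again $v_i \in N_G[S]$ hence $V(H_i) \subseteq N_G[S]$; the remaining case needs a maximality-of-$S$ argument to show $S$ does touch $H_i$. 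Assuming $S \cap V(H_i) \neq \varnothing$, suppose it is not maximum in $H_i[N_{H_i}[S \cap V(H_i)]]$; replacing $S \cap V(H_i)$ by a strictly larger stable set $T$ of that induced subgraph and checking that $(S \setminus V(H_i)) \cup T$ is stable in $G$ (the only new adjacencies to worry about are between $T$ and $v_i$, but if $v_i \in S$ then by (ii) $T \subseteq V(H_i)$ is all of $V(H_i)$-adjacent... here one uses that $v_i \in S$ forces $S \cap V(H_i) = \varnothing$ since $v_i$ is adjacent to all of $V(H_i)$, so actually $S \cap V(H_i) = \varnothing$ and $T$ would have to be empty too --- a cleaner case split) contradicts $S \in \Psi(G)$. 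For (iv), conversely given such a stable $S$, I compute $N_G[S]$ explicitly as a union of pieces $N_{H_i}[S \cap V(H_i)]$ together with the relevant $v_i$'s, and show any stable $T \subseteq N_G[S]$ decomposes compatibly so that $|T| = \sum_i |T \cap (\text{piece}_i)| \le \sum_i |S \cap V(H_i)| = |S|$, using $S \cap V(H_i) \in \Psi(H_i)$ piece by piece and the complete-graph condition to handle the vertices $v_i \in S$. The main obstacle throughout is the bookkeeping of how $N_G[\cdot]$ interacts with the corona structure --- specifically pinning down exactly when $v_i \in N_G[S]$ and ensuring that locally-maximum replacements on one $H_i$ never collide with vertices elsewhere; once the decomposition $N_G[S] = \bigcup_i \big(N_{H_i}[S\cap V(H_i)] \cup \{v_i : v_i \in N_G[S]\}\big)$ is established and one checks that the $v_i$'s contribute nothing extra (because $H_i$ complete and already occupied, or $v_i$ adjacent to an occupied neighbor), all four parts follow by comparing cardinalities piece by piece.
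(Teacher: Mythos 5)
Your part \emph{(i)} is fine and matches the paper's argument, but the second half of \emph{(ii)} has a genuine gap. You claim that if $v_i\in S$, $v_k\in N_X(v_i)$ and $S\cap V(H_k)=\varnothing$, then $V(H_k)\subseteq N_G[S]$ "since $v_k\in N_G(v_i)$ and $V(H_k)\subseteq N_G(v_k)$", and that one could therefore add a vertex of $V(H_k)$ to $S$. This confuses "neighbor of a vertex of $N_G[S]$" with "member of $N_G[S]$": a vertex $x\in V(H_k)$ lies in $N_G[S]$ only if it has a neighbor \emph{in $S$ itself}, and its only neighbors are $v_k\notin S$ and $V(H_k)\cap S=\varnothing$ --- exactly the fact you invoke at the end to show $x$ has no neighbor in $S$. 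So $x\notin N_G[S]$, the set $S\cup\{x\}$ is not contained in $N_G[S]$, and no contradiction with local maximality of $S$ arises. The paper instead performs a swap entirely inside $N_G[S]$: remove $v_i$ together with all vertices $v_{j_1},\dots,v_{j_q}$ of $N(v_k)\cap S\cap V(X)$, and insert $v_k$, a vertex $x\in V(H_i)$, and one vertex $x_{j_t}\in V(H_{j_t})$ for each $t$; all inserted vertices are neighbors of removed members of $S$, hence in $N_G[S]$, and the result is a stable set of size $|S|+1$, the required contradiction. Relatedly, your "cleanest route" for completeness of $H_i$ (restricting $S$ to $N_G[v_i]$ and claiming it must be maximum there) is not justified; your first, direct argument with a non-adjacent pair $a,b\in V(H_i)$ does work, once you note that $v_i\in S$ forces $S\cap V(H_i)=\varnothing$.

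In \emph{(iii)} you introduce a spurious obstacle: you assert that the case $S\cap V(H_i)=\varnothing$ is trivial "only if $\alpha(H_i)=0$" and propose to prove that $S$ must meet every $H_i$ with $\alpha(H_i)\geq 1$. That claim is false (take $X=K_2$, $H_1=H_2=K_1$; a single pendant vertex is in $\Psi(G)$ yet misses $H_2$), and it is also unnecessary: $\varnothing\in\Psi(H_i)$ vacuously, since $N_{H_i}[\varnothing]=\varnothing$, so the paper simply notes that $S\cap V(H_j)\notin\Psi(H_j)$ forces $S\cap V(H_j)\neq\varnothing$ and then does the replacement argument you also sketch. Finally, \emph{(iv)} is only an outline in your proposal, and the counting identity you display, $|T|=\sum_i|T\cap(\mathrm{piece}_i)|\leq\sum_i|S\cap V(H_i)|=|S|$, is wrong whenever $S\cap V(X)\neq\varnothing$ (those vertices are missing from the right-hand side); moreover the crucial case of a vertex $v_j\in T\setminus S$ --- where one must use the hypothesis on $S$ to conclude $S\cap V(H_j)\neq\varnothing$ before comparing the block $V(H_j)\cup\{v_j\}$ --- is not carried out. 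The paper's three-case partition of the blocks $V(H_j)\cup\{v_j\}$ according to $v_j\in A-S$, $v_j\notin A\cup S$, $v_j\in S$ is the bookkeeping your sketch would need to make precise.
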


\begin{proof}
\emph{(i)} Let $A\in\Psi(H_{i})$. Then, $N_{G}[A]=N_{H_{i}}[A]\cup\{v_{i}\}$
and, thus, $A$ is a maximum stable set in $N_{G}[A]$, as well, i.e., $A\in
\Psi(G)$. Consequently, $\Psi(H_{i})\subseteq\Psi(G)$ for each $i\in
\{1,2,...,n\}$.

\emph{(ii)} If $v_{i}\in S$ and there are non-adjacent vertices $x,y\in
V(H_{i})$, then the set $S\cup\{x,y\}-\{v_{i}\}$ is stable in $N_{G}[S]$,
larger than $S$, in contradiction with $S\in\Psi(G)$. Therefore, $H_{i}$ must
be a complete graph.

Assume that $S\cap V(H_{k})=\varnothing$ for some $v_{k}\in N_{X}(v_{i})$.

If $N(v_{k})\cap S=\{v_{i}\}$, then for every $x\in V(H_{i})$, the set
$S\cup\{v_{k},x\}-\{v_{i}\}$ is stable in $N_{G}[S]$ and larger than $S$, in
contradiction with $S\in\Psi(G)$.

If $N(v_{k})\cap S=\{v_{i},v_{j_{1}},v_{j_{2}},...,v_{j_{q}}\}$, then
\[
S\cup\{x,v_{k}\}\cup\{x_{j_{1}},x_{j_{2}},...,x_{j_{q}}\}-\{v_{i},v_{j_{1}%
},v_{j_{2}},...,v_{j_{q}}\}
\]
is a stable set in $N_{G}[S]$ for every $x\in V(H_{i})$ and each $x_{j_{t}}\in
V(H_{j_{t}}),1\leq t\leq q$, larger than $S$, in contradiction with $S\in
\Psi(G)$.

Consequently, $S\cap V(H_{k})\neq\varnothing$, for every $v_{k}\in N_{X}%
(v_{i})$.

\emph{(iii)} Assume that $S\in\Psi(G)$.

If $S_{j}=S\cap V(H_{j})\notin\Psi(H_{j})$, then $v_{j}\notin S$ (because
$S_{j}\neq\varnothing$) and there is some stable set $A_{j}\subseteq N_{H_{j}%
}[S_{j}]$ larger than $S_{j}$. Since
\[
N_{H_{j}}[S_{j}]\cup\{v_{j}\}=N_{G}[S_{j}]
\]
and $v_{j}\notin A$, we get that $(S-S_{j})\cup A$ is a stable set included in
$N_{G}[S]$ and $\left\vert S\right\vert <\left\vert (S-S_{j})\cup A\right\vert
$, in contradiction with $S\in\Psi(G)$. Therefore, $S\cap V(H_{i})\in
\Psi(H_{i})$ for every $i\in\{1,2,...,n\}$.

\emph{(iv)} We have to prove that $\left\vert A\right\vert \leq\left\vert
S\right\vert $ for every stable set $A\subseteq N_{G}[S]$.

Let us define the following partitions of the sets $A$ and $S$:%
\[
A=A_{1}\cup A_{2}\cup A_{3},\quad S=S_{1}\cup S_{2}\cup S_{3}%
\]
where%
\[
A_{1}=%
{\textstyle\bigcup\limits_{v_{j}\in\left(  A-S\right)  }}
\left[  A\cap\left(  V(H_{j})\cup\left\{  v_{j}\right\}  \right)  \right]
,\quad S_{1}=%
{\textstyle\bigcup\limits_{v_{j}\in\left(  A-S\right)  }}
\left[  S\cap\left(  V(H_{j})\cup\left\{  v_{j}\right\}  \right)  \right]
\]%
\[
A_{2}=%
{\textstyle\bigcup\limits_{v_{j}\in V(X)-\left(  A\cup S\right)  }}
\left[  A\cap\left(  V(H_{j})\cup\left\{  v_{j}\right\}  \right)  \right]
,\quad S_{2}=%
{\textstyle\bigcup\limits_{v_{j}\in V(X)-\left(  A\cup S\right)  }}
\left[  S\cap\left(  V(H_{j})\cup\left\{  v_{j}\right\}  \right)  \right]
\]%
\[
A_{3}=%
{\textstyle\bigcup\limits_{v_{j}\in S}}
\left[  A\cap\left(  V(H_{j})\cup\left\{  v_{j}\right\}  \right)  \right]
,\quad S_{3}=%
{\textstyle\bigcup\limits_{v_{j}\in S}}
\left[  S\cap\left(  V(H_{j})\cup\left\{  v_{j}\right\}  \right)  \right]  .
\]

Our intent is to show that
\[
\left\vert A_{k}\right\vert \leq\left\vert S_{k}\right\vert ,1\leq k\leq3,
\]
which will lead us to the conclusion that $\left\vert A\right\vert
\leq\left\vert S\right\vert $.

\textit{Case 1.} $v_{j}\in A-S$.

Since $S$ is maximal in $N_{G}[S]$, we infer that $v_{j}y\in E(G)$, for some
$y\in S$. If $y\in V(H_{j})$, then $S\cap V(H_{j})\neq\varnothing$. Otherwise,
$y\in V(X)$ and according with the hypothesis on $S$, again $S\cap
V(H_{j})\neq\varnothing$. Therefore, we get that
\[
\left\vert A\cap\left(  V(H_{j})\cup\left\{  v_{j}\right\}  \right)
\right\vert =1\leq\left\vert S\cap\left(  V(H_{j})\cup\left\{  v_{j}\right\}
\right)  )\right\vert ,
\]
which implies $\left\vert A_{1}\right\vert \leq\left\vert S_{1}\right\vert $.

\textit{Case 2.} $v_{j}\in V(X)-\left(  A\cup S\right)  $.

Since $S\cap V(H_{j})\in\Psi(H_{j})$, we have that $\left\vert A\cap
V(H_{j})\right\vert \leq\left\vert S\cap V(H_{j})\right\vert $. Together with
the condition $v_{j}\in V(X)-\left(  A\cup S\right)  $ it gives
\[
\left\vert A\cap\left(  V(H_{j})\cup\left\{  v_{j}\right\}  \right)
\right\vert \leq\left\vert S\cap\left(  V(H_{j})\cup\left\{  v_{j}\right\}
\right)  )\right\vert .
\]

Therefore, it follows that $\left\vert A_{2}\right\vert \leq\left\vert
S_{2}\right\vert $.

\textit{Case 3.} $v_{j}\in S$.

According with the hypothesis on $S$, $H_{j}$ is a clique. Consequently, we
obtain
\[
\left\vert A\cap\left(  V(H_{j})\cup\left\{  v_{j}\right\}  \right)
\right\vert \leq1\leq\left\vert S\cap\left(  V(H_{j})\cup\left\{
v_{j}\right\}  \right)  )\right\vert ,
\]
which ensures that $\left\vert A_{3}\right\vert \leq\left\vert S_{3}%
\right\vert $.
\end{proof}

The following theorem generalizes some partial findings from
\cite{LevMann07Buch}, \cite{LevMan08a}, \cite{LevMan08b}.

\begin{theorem}
If $G=X\circ\{H_{1},H_{2},...,H_{n}\}$ and $H_{1},H_{2},...,H_{n}$ are
non-empty graphs, then $\Psi(G)$ is a greedoid if and only if every
$\Psi(H_{i}),i=1,2,...,n$, is a greedoid.
\end{theorem}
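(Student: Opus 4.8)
The plan is to prove the two implications separately, using Lemma~\ref{lem3} as the main engine in both directions.

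For the ``only if'' direction, suppose $\Psi(G)$ is a greedoid and fix an index $i$; I want to show $\Psi(H_{i})$ is a greedoid. Accessibility for $\Psi(H_{i})$: take a nonempty $A\in\Psi(H_{i})$. By Lemma~\ref{lem3}(i), $A\in\Psi(G)$, so by accessibility in $\Psi(G)$ there is $x\in A$ with $A-\{x\}\in\Psi(G)$. Since $A-\{x\}\subseteq V(H_{i})$, Lemma~\ref{lem3}(iii) gives $(A-\{x\})\cap V(H_{i})=A-\{x\}\in\Psi(H_{i})$; moreover $x\in A\subseteq V(H_{i})$, so the removed element lies in $V(H_{i})$ as required. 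For the exchange property, take $A,B\in\Psi(H_{i})$ with $|A|=|B|+1$. By Lemma~\ref{lem3}(i) both lie in $\Psi(G)$, so exchange in $\Psi(G)$ yields $x\in A-B$ with $B\cup\{x\}\in\Psi(G)$; since $x\in A\subseteq V(H_{i})$ we have $B\cup\{x\}\subseteq V(H_{i})$, and Lemma~\ref{lem3}(iii) gives $B\cup\{x\}=(B\cup\{x\})\cap V(H_{i})\in\Psi(H_{i})$. This settles one direction cleanly; note it uses essentially nothing about $X$.

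For the ``if'' direction, assume every $\Psi(H_{i})$ is a greedoid; I must verify accessibility and exchange for $\Psi(G)$. This is where Lemma~\ref{lem3}(iii) and (iv) do the heavy lifting: they give a structural description of membership in $\Psi(G)$ in terms of the ``slices'' $S\cap V(H_{i})$ together with the side conditions on which $v_{i}$ may belong to $S$ (namely $H_{i}$ complete and all $X$-neighbours of $v_{i}$ hit by $S$). For accessibility, given nonempty $S\in\Psi(G)$, I would first try to remove a vertex from a slice: pick any $j$ with $S\cap V(H_{j})\neq\varnothing$ and $v_{j}\notin S$; since $S\cap V(H_{j})\in\Psi(H_{j})$ is nonempty, accessibility in $\Psi(H_{j})$ removes some $x$ leaving the slice in $\Psi(H_{j})$, and I claim $S-\{x\}$ still satisfies the hypotheses of Lemma~\ref{lem3}(iv) — the only thing to check is that removing $x$ does not violate the ``$S\cap V(H_k)\neq\varnothing$ for all $v_k\in N_X(v_i)$'' condition for some $v_i\in S\cap V(X)$, which could fail only if the slice $S\cap V(H_j)$ becomes empty, i.e.\ was a singleton. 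The case that genuinely needs care is when every nonempty slice is a singleton whose vertex is needed as an $X$-neighbour witness, or when $S\subseteq V(X)$; here I would remove an appropriate $v_i\in S\cap V(X)$ instead, choosing one that is not a required witness for any other vertex of $S$ (such a $v_i$ exists because the ``witness'' digraph on $S\cap V(X)$ restricted to edges of $X$, being a subgraph of a finite graph, has a vertex of appropriate type, or one can induct on a suitable ordering). The exchange property is handled similarly: given $S,T\in\Psi(G)$ with $|S|=|T|+1$, compare the slices; some index $j$ has $|S\cap(V(H_j)\cup\{v_j\})|>|T\cap(V(H_j)\cup\{v_j\})|$, and I would use exchange in $\Psi(H_j)$ (or add a suitable $v_j$) to enlarge $T$ by one vertex of $S-T$ while preserving the Lemma~\ref{lem3}(iv) hypotheses, invoking Lemma~\ref{lem3}(iv) to conclude the enlarged set is in $\Psi(G)$.

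The main obstacle I anticipate is bookkeeping around the vertices $v_{i}\in V(X)$ that lie in a local maximum stable set: unlike the disjoint-union case, these vertices tie slices together through the side conditions of Lemma~\ref{lem3}(iv), so a naive slicewise accessibility/exchange move can break the ``every $X$-neighbour is hit'' requirement. The right way to manage this is to observe that if $v_i\in S$ then by Lemma~\ref{lem3}(ii) the slice $S\cap V(H_k)$ is nonempty for every $X$-neighbour $v_k$ of $v_i$, and to always perform the removal/addition at a slice or a vertex chosen to be ``terminal'' with respect to these dependencies — formalizing ``terminal'' is the one genuinely fiddly point, but it is a finiteness argument, not a deep one. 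Everything else reduces to the greedoid axioms for the $\Psi(H_i)$ plus the membership criterion of Lemma~\ref{lem3}.
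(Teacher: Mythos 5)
Your ``only if'' half is complete and correct: embedding $\Psi(H_{i})$ into $\Psi(G)$ via Lemma \ref{lem3}\emph{(i)}, applying the greedoid axioms inside $\Psi(G)$, and projecting back with Lemma \ref{lem3}\emph{(iii)} is sound, and is in fact more explicit than the paper's one-line justification of the same fact. The problems are in the ``if'' half, which in your write-up is a plan rather than a proof. The point you flag as ``genuinely fiddly'' in accessibility is not actually an issue: the side conditions of Lemma \ref{lem3}\emph{(iv)} attached to a vertex $v_{i}\in S\cap V(X)$ refer only to the slices $S\cap V(H_{k})$, never to other vertices of $V(X)$, and deleting a vertex of $V(X)$ leaves every slice unchanged. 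Hence \emph{any} $v_{i}\in S\cap V(X)$ may be removed; there is no ``witness digraph'' among the $X$-vertices and no terminal-vertex or ordering argument is needed. The paper exploits exactly this: if $S\cap V(X)\neq\varnothing$ it deletes such a $v_{i}$ (Lemma \ref{lem3}\emph{(ii)} and \emph{(iv)} give $S-\{v_{i}\}\in\Psi(G)$ at once), and only when $S\cap V(X)=\varnothing$ does it descend into a slice, where there are no side conditions at all, so accessibility of $\Psi(H_{i})$ suffices.

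The genuine gap is in the exchange step, which you leave unproven at exactly the point where the work lies: saying you will ``enlarge $T$ by one vertex of $S-T$ while preserving the Lemma \ref{lem3}(iv) hypotheses'' restates the goal. The choice of block matters, and the preservation must be argued. If the surplus block $B_{j}=V(H_{j})\cup\{v_{j}\}$ you pick has $v_{j}\in S$, then to add $v_{j}$ to $T$ you need $T\cap V(H_{k})\neq\varnothing$ and $v_{k}\notin T$ for every $v_{k}\in N_{X}(v_{j})$; neither is automatic. One has to show, for instance, that a failure of the first condition produces another surplus block in which the slice of $S$ is strictly larger than that of $T$ (and then the move should be made there instead: shrink the larger slice by accessibility in $\Psi(H_{j})$ until the cardinalities differ by one, apply the exchange axiom of $\Psi(H_{j})$, and check $v_{j}\notin T$ so that stability and the \emph{(iv)}-hypotheses survive), while a failure of the second condition contradicts Lemma \ref{lem3}\emph{(ii)} applied to $T$, which would force $T\cap V(H_{j})\neq\varnothing$. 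The paper organizes precisely this dichotomy: either all slices of $S_{1}$ and $S_{2}$ coincide, in which case the extra element is a vertex $v_{i}\in V(X)$ and the nonempty-slice witnesses required by \emph{(iv)} transfer from $S_{1}$ to $S_{2}$ verbatim, or some slice of $S_{1}$ is strictly larger, in which case the exchange is performed inside that $\Psi(H_{i})$. Without an argument of this kind your sketch does not establish the exchange property, so the ``if'' direction is incomplete as written.
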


\begin{proof}
Assume that $\Psi(G)$ is a greedoid.

According to Lemma \ref{lem3}\emph{(i)},\emph{(iii)}, we get that
\[
\Psi(H_{i})=\{S\cap V(H_{i}):S\in\Psi(G)\},1\leq i\leq n.
\]
Hence, every $\Psi(H_{i})$ satisfies both accessibility property and exchange
property, i.e., $\Psi(H_{i})$ is a greedoid.

Conversely, suppose that every $\Psi(H_{i}),1\leq i\leq n$, is a greedoid.

Firstly, we show that $\Psi(G)$ satisfies the accessibility property.

Let $S\in\Psi(G)$ and $S\neq\varnothing$.

If $v_{i}\in S\cap V(X)$, then $N_{X}(v_{i})\cap S=\varnothing,V(H_{i})\cap
S=\varnothing$, while, by Lemma \ref{lem3}\emph{(ii)}, $S\cap V(H_{k}%
)\neq\varnothing$ holds for every $v_{k}\in N(v_{i})$. Hence, we may infer
that $S-\{v_{i}\}\in\Psi(G)$.

If $S\cap V(X)=\varnothing$, then there is some $i\in\{1,2,...,n\}$, such that
$S_{i}=S\cap V(H_{i})\neq\varnothing$ and $S_{i}\in\Psi(H_{i})$, according to
Lemma \ref{lem3}\emph{(iii)}. Since $\Psi(H_{i})$ is a greedoid, there is some
$x\in S_{i}$ such that $S_{i}-\{x\}\in\Psi(H_{i})$. Since
\[
N_{G}[S-\{x\}]\cap V(H_{i})=N_{H_{i}}[S_{i}-\{x\}],
\]
while
\[
N_{G}[S-\{x\}]\cap V(H_{j})=N_{G}[S]\cap V(H_{j})
\]
\ for every $j\neq i$,\ we may conclude that $S-\{x\}\in\Psi(G)$.

We check now the exchange property. Let $S_{1},S_{2}\in\Psi(G)$ be with
$\left\vert S_{1}\right\vert =\left\vert S_{2}\right\vert +1$.

\textit{Case 1}. $S_{1}\cap V(H_{j})=S_{2}\cap V(H_{j})$ for all
$j\in\{1,2,...,n\}$. Then there is some $v_{i}\in S_{1}-S_{2}$, because
$\left\vert S_{1}\right\vert >\left\vert S_{2}\right\vert $. Hence, it follows
$S_{1}\cap V(H_{i})=\varnothing$ , which ensures that also $S_{2}\cap
V(H_{i})=\varnothing$. By Lemma \ref{lem3}\emph{(ii)}, we have that, for every
$v_{k}\in N_{G}(v_{i})$, $S_{1}\cap V(H_{k})\neq\varnothing$ which implies
that also $S_{2}\cap V(H_{k})\neq\varnothing$. Consequently, using Lemma
\ref{lem3}\emph{(iv)}, we may infer that $S_{2}\cup\{v_{k}\}\in\Psi(G)$.

\textit{Case 2.} There is some $i\in\{1,2,...,n\}$, such that $A_{1}=S_{1}\cap
V(H_{i})$ is larger than $A_{2}=S_{2}\cap V(H_{i})$. Since $A_{1},A_{2}\in
\Psi(H_{i})$ and $\Psi(H_{i})$ is a greedoid, there must exist some $x\in
A_{1}-A_{2}$, such that $A_{2}\cup\{x\}\in\Psi(H_{i})$. Hence, we get also
that $S_{2}\cup\{x\}\in\Psi(G)$.

Consequently, $\Psi(G)$\ satisfies the exchange property.

In conclusion, $\Psi(G)$\ forms a greedoid on the vertex set of $G$.
\end{proof}

\begin{corollary}
$\Psi(X\circ H)$ is a greedoid if and only if $\Psi(H)$ is a greedoid.
\end{corollary}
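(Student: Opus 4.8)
The plan is to obtain the corollary as the equal-factors instance of the theorem just proved. By definition the corona $X\circ H$ is precisely the graph $X\circ\{H_{1},H_{2},\dots,H_{n}\}$ in which $n=|V(X)|$ and $H_{1}=H_{2}=\cdots=H_{n}=H$; since the corona construction takes $H$ to be a genuine (non-empty) graph, all of the factors are non-empty, so the hypotheses of the theorem are met. Applying it, $\Psi(X\circ H)$ is a greedoid if and only if every $\Psi(H_{i})$, $1\le i\le n$, is a greedoid. As each of these $n$ conditions is literally the single assertion that $\Psi(H)$ is a greedoid, the two sides collapse and the statement follows.

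The only slightly delicate point is the size of $V(X)$, because the theorem rests on Lemma \ref{lem3}, which is stated for $n\ge 2$. When $|V(X)|\ge 2$ nothing further is needed. For the degenerate case $|V(X)|=1$, write $V(X)=\{v\}$; then $X\circ H$ is $H$ with one extra vertex $v$ joined to every vertex of $H$, and a direct inspection of the stable sets of $X\circ H$ (splitting according to whether $v$ lies in the set, in which case the set must equal $\{v\}$) shows $\Psi(X\circ H)=\Psi(H)$ when $H$ is not complete and $\Psi(X\circ H)=\Psi(H)\cup\{\{v\}\}$ when $H$ is complete. In the first case the equivalence is immediate; in the second, both $\Psi(H)$ and $\Psi(X\circ H)$ reduce to the family of (at most) singletons, which is always a greedoid, so the statement still holds.

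I do not anticipate a real obstacle here: the corollary is essentially a change of notation in the theorem, the substantive work having already been done in Lemma \ref{lem3} and in the proof of the theorem. The one thing worth confirming is simply that specializing to $H_{1}=\cdots=H_{n}=H$ violates no hypothesis — which, as noted, it does not, provided $H$ is non-empty.
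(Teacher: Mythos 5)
Your proposal is correct and matches the paper's (implicit) argument: the corollary is just the theorem specialized to $H_{1}=H_{2}=\cdots=H_{n}=H$, which the paper states without further proof. Your extra verification of the degenerate case $|V(X)|=1$ (where Lemma \ref{lem3} is stated only for $n\ge 2$) is sound and a reasonable added precaution, but it does not change the substance of the argument.
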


\section{Conclusions and future work}

Let $\{H_{1},...,H_{n}\}$ be a family of graphs indexed by the vertex set
$\{1,2,..,n\}$ of a graph $H_{0}$. The graph denoted by $H_{0}[H_{1}%
,H_{2},...,H_{n}]$ is defined as:%

\[
V(H_{0}[H_{1},H_{2},...,H_{n}])=\{1\}\times V(H_{1})\cup...\cup\{n\}\times
V(H_{n}),
\]
and $(i,x),(j,y)\in V(H_{0}[H_{1},H_{2},...,H_{n}])$ are adjacent if and only
if either \emph{(i)} $ij\in E(H_{0})$ or \emph{(ii) }$i=j$ and $xy\in
E(H_{i})$. For instance, $\overline{K_{n}}[H_{1},H_{2},...,H_{n}]$ is the
disjoint union of the graphs $H_{1},...,H_{n}$; $K_{n}[H_{1},H_{2},...,H_{n}]$
is the Zykov sum of $H_{1},...,H_{n}$; while if $H_{1}=H_{2}=...=H_{n}$, then
$H_{0}[H_{1},H_{2},...,H_{n}]$ is known as lexicographic product $H_{0}\bullet
H_{1}$. It seems to be interesting to establish necessary and sufficient
conditions ensuring that $\Psi(H_{0}[H_{1},H_{2},...,H_{n}])$\ forms a
greedoid. When $H_{0}\in\{\overline{K_{n}},K_{n}\}$, Propositions \ref{prop1},
\ref{prop2} give the conditions needed.

\end{document}